\numberwithin{equation}{section}
\theoremstyle{plain}
\newtheorem{thm}{Theorem}[section]
\newtheorem{lem}[thm]{Lemma}
\newtheorem{prop}[thm]{Proposition}
\theoremstyle{definition}
\newtheorem{defn}[thm]{Definition}
\newtheorem{rem}[thm]{Remark}
\newcommand{\Z}{\mathbb{Z}}
\newcommand{\Q}{\mathbb{Q}}
\newcommand{\C}{\mathbb{C}}
\newcommand{\bk}{{\boldsymbol{k}}}
\newcommand{\tk}{\tilde{k}}
\newcommand{\tbk}{\tilde{\boldsymbol{k}}}
\newcommand{\tbl}{\tilde{\boldsymbol{l}}}
\newcommand{\cA}{\mathcal{A}}
\newcommand{\Li}{\mathrm{Li}}
\newcommand{\tLi}{\widetilde{\mathrm{Li}}}
\newcommand{\abs}[1]{\lvert #1 \rvert}
\newcommand{\argstack}[2]{\genfrac{}{}{0pt}{}{#1}{#2}}
\title[Duality of multiple polylogarithms and their $q$-analogues]
{Duality of one-variable multiple polylogarithms and their $q$-analogues}
\author{Shuji Yamamoto}
\address{Department of Mathematics, Faculty of Science and Technology, Keio University, 
3-14-1 Hiyoshi, Kohoku-ku, Yokohama, 223-8522, Japan}
\email{yamashu@math.keio.ac.jp}\subjclass[2010]{11M32 (primary), 33C05 (secondary)}
\thanks{This research was supported in part by JSPS KAKENHI Grant Numbers 
16H06336, 18K03221, 18H05233, 21K03185.}
\keywords{multiple zeta values, multiple polylogarithms, hypergeometric series, 
$q$-analogue, connected sum}
\begin{document}
\maketitle

\begin{abstract}
The duality relation of one-variable multiple polylogarithms was proved by 
Hirose, Iwaki, Sato and Tasaka by means of iterated integrals. 
In this paper, we give a new proof using the method of connected sums, 
which was recently invented by Seki and the author. 
Interestingly, the connected sum involves the hypergeometric function in its connector. 
Moreover, we introduce two kinds of $q$-analogues of the one-variable multiple polylogarithms 
and generalize the duality to them. 
\end{abstract}

\section{Introduction}
For a tuple of positive integers $\bk=(k_1,\ldots,k_r)$ with $k_r\ge 2$, let 
\[\zeta(\bk)\coloneqq \sum_{0<m_1<\dots<m_r}\frac{1}{m_1^{k_1}m_2^{k_2}\dots m_r^{k_r}} \] 
be the multiple zeta value of index $\bk$. There are many studies on the $\Q$-linear relations 
among multiple zeta values. A remarkable example of such relations is the duality relation 
\begin{equation}\label{eq:duality MZV}
\zeta(\bk)=\zeta(\bk^\dagger). 
\end{equation}
Here, by representing the given index $\bk$ as 
\[\bk=(\underbrace{1,\ldots,1}_{a_1-1},b_1+1,\dots,\underbrace{1,\ldots,1}_{a_s-1},b_s+1)\]
with positive integers $a_1,b_1,\ldots,a_s,b_s$, 
its \emph{dual index} $\bk^\dagger$ is defined by 
\[\bk^\dagger\coloneqq 
(\underbrace{1,\ldots,1}_{b_s-1},a_s+1,\dots,\underbrace{1,\ldots,1}_{b_1-1},a_1+1).\]
As is well known (cf.~\cite[\S9]{Zag}), 
the duality relation \eqref{eq:duality MZV} is an immediate consequence of 
the iterated integral expression of multiple zeta values. 
Recently, Seki and the author \cite{SY1} gave a new method of proving the duality, 
which used certain multiple sums (called the \emph{connected sums}) 
and made no use of the integrals. 
An advantage of this method is that it can be directly generalized to obtain a $q$-analogue. 
In the present paper, we apply it to multiple polylogarithms and their $q$-analogues. 

For an index $\bk=(k_1,\ldots,k_r)$, not necessarily with $k_r\ge 2$, let 
\[\Li_\bk(z_1,\ldots,z_r)\coloneqq \sum_{0<m_1<\dots<m_r}
\frac{z_1^{m_1}z_2^{m_2-m_1}\dots z_r^{m_r-m_{r-1}}}{m_1^{k_1}m_2^{k_2}\dots m_r^{k_r}}\]
be the multiple polylogarithm.
Moreover, let $I$ be a subset of $\{1,\ldots,r\}$ and assume that $k_r\ne 1$ or $r\in I$. 
Then we consider a function $\Li^I_\bk(z)$ of a complex variable $z$ with $\abs{z}<1$ defined by  
\begin{equation}\label{eq:Li^I_bk}
\begin{split}
\Li^I_\bk(z)&\coloneqq\sum_{0=m_0<m_1<\dots<m_r}
\frac{z^{\sum_{i\in I}(m_i-m_{i-1})}}{m_1^{k_1}m_2^{k_2}\dots m_r^{k_r}} \\
&=\Li_\bk(z_1,\ldots,z_r) \quad \text{where}\quad 
z_i=\begin{cases} z & (i\in I),\\ 1 & (i\notin I). \end{cases}
\end{split}
\end{equation}
We call this function a \emph{one-variable multiple polylogarithm} or one-variable MPL for short. 
This name suggests that it is a function of \emph{one variable} $z$, and also that 
\emph{one} or \emph{the variable} $z$ is assigned to each of $r$ arguments of $\Li_\bk(z_1,\ldots,z_r)$. 
Note that we recover the multiple zeta values by putting $I=\emptyset$ (the empty set): 
\[\Li_\bk^\emptyset(z)=\zeta(\bk). \] 

Just as in the case of multiple zeta values, we are interested in 
$\Q$-linear relations among one-variable MPLs. 
For example, Hirose--Iwaki--Sato--Tasaka \cite{HIST} obtained such relations 
which generalize the duality and sum formulas for multiple zeta values, 
by using iterated integral representation of one-variable MPLs. 

Let us recall the duality formula of Hirose--Iwaki--Sato--Tasaka. 
Let $\cA\coloneqq\Q\langle e_0,e_1,e_z\rangle$ be the non-commutative polynomial algebra 
in three indeterminates $e_0$, $e_1$ and $e_z$, and define its linear subspace 
$\cA^0\coloneqq \Q+\Q e_z+e_1\cA e_0+e_1\cA e_z+e_z\cA e_0+e_z\cA e_z$. 
Then we define a linear map $L$ from $\cA^0$ to the space of holomorphic functions 
on the unit disc in $\C$ by $L(1)\coloneqq 1$ and 
\[L(e_{z_1}e_0^{k_1-1}\dots e_{z_r}e_0^{k_r-1})\coloneqq (-1)^r\Li^I_\bk(z)\]
with $z_i$ as in \eqref{eq:Li^I_bk}. 

\begin{thm}[{Duality of one-variable MPLs, \cite[Theorem 1.1]{HIST}}]\label{thm:Main}
Let $\tau\colon\cA\to\cA$ be the anti-automorphism of $\Q$-algebra defined by 
\[\tau(e_0)=e_z-e_1,\quad \tau(e_1)=e_z-e_0,\quad \tau(e_z)=e_z. \]
Then we have 
\begin{equation}\label{eq:Main}
L(w)=L(\tau(w))
\end{equation}
for any $w\in\cA^0$. 
\end{thm}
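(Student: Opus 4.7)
The plan is to adapt the connected-sum method of Seki and the author \cite{SY1} to the one-variable setting. For two pieces of indexed data $(\bk,I)$ and $(\bl,J)$, I would introduce a \emph{connected sum}
\[
Z^{I,J}_{\bk,\bl}(z)
\coloneqq
\sum_{\substack{0=m_0<m_1<\dots<m_r \\ 0=n_0<n_1<\dots<n_s}}
\frac{z^{\sum_{i\in I}(m_i-m_{i-1})+\sum_{j\in J}(n_j-n_{j-1})}}
{m_1^{k_1}\cdots m_r^{k_r}\,n_1^{l_1}\cdots n_s^{l_s}}\,
C(m_r,n_s;z),
\]
where the connector $C(m,n;z)$ is the classical Seki--Yamamoto factor $m!\,n!/(m+n)!$ dressed with a Gauss hypergeometric function of the form ${}_2F_1(\alpha,\beta;m+n+1;z)$. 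The degenerate endpoints $(\bl,J)=(\varnothing,\varnothing)$ and $(\bk,I)=(\varnothing,\varnothing)$ should recover $L(w)$ and $L(\tau(w))$ respectively, so that the duality becomes the statement that one and the same sum admits two evaluations.

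The engine of the argument is a set of three \emph{transport relations}, one for each generator of $\cA$. The $e_0$-transport removes a rightmost $e_0$ from the $\bk$-side word (i.e.\ decrements $k_r$) and adjoins the two-term combination $e_z-e_1$ to the $\bl$-side; the $e_1$-transport performs $e_1\mapsto e_z-e_0$; and the $e_z$-transport performs $e_z\mapsto e_z$. Each is proved by manipulating the relevant factor of the summand -- $1/m_r^{k_r}$ in the first case, a $z$-power factor $z^{m_r-m_{r-1}}$ or a related quantity in the others -- against the connector, using partial fractions together with a contiguous relation of ${}_2F_1$. The two-term nature of the first two contiguous relations is precisely what produces the differences $\tau(e_0)=e_z-e_1$ and $\tau(e_1)=e_z-e_0$.

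With the transport relations in hand, the main theorem follows by induction on the length of $w=e_{\sigma_1}\cdots e_{\sigma_n}\in\cA^0$. Starting from the connected sum whose $\bk$-side encodes $w$ and whose $\bl$-side is trivial, one applies the $\sigma_j$-transport successively -- processing letters from right to left -- and at each step migrates one letter across the connector while accumulating the word $\tau(w)$ on the $\bl$-side. After $n$ steps the $\bk$-side collapses to nothing and the resulting identity is $L(w)=L(\tau(w))$. Linearity distributes the two-term $\tau$-images bilinearly, and the subspace $\cA^0$ is exactly the one that guarantees convergence of every intermediate connected sum: a dangerous letter $e_0$ at the right end of either word is always preceded by an $e_z$, which contributes a $z$-power that rescues the geometric series.

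The main obstacle is pinning down the connector $C(m,n;z)$ and verifying that a single choice supports all three transport relations. The factorial part is forced by the $z=0$ specialization, which must recover the MZV duality; the essential task is to determine the hypergeometric dressing whose three contiguous identities yield exactly the $\tau$-prescribed images. Once such a $C$ is identified -- and the rigidity of ${}_2F_1$ contiguous relations makes the choice essentially canonical -- each transport relation reduces to a routine partial-fraction computation, and the rest of the proof is a formal induction.
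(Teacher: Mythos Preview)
Your plan is the paper's proof. The connector turns out to be
\[
C(m,n;z)=\frac{m!\,n!}{(m+n)!}\,{}_2F_1\!\Bigl(\genfrac{}{}{0pt}{}{m,\ n}{m+n+1};z\Bigr),
\]
and the transport relations follow from the two contiguous relations \eqref{eq:HG 1}--\eqref{eq:HG 2} by telescoping (no partial fractions are needed).

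The one substantive difference is packaging. The paper first changes basis from $\{e_0,e_1,e_z\}$ to $\{x,y_0,y_1\}=\{e_0,\,-e_z,\,e_z-e_1\}$, on which $\tau$ acts by the genuine permutation $x\leftrightarrow y_1$, $y_0\mapsto y_0$. This collapses your three two-term transport relations to the two one-term identities of Proposition~\ref{prop:transport} and recasts Theorem~\ref{thm:Main} as a one-to-one duality $\tLi(\tbk;z)=\tLi(\tbk^\dagger;z)$ of augmented indices (Theorem~\ref{thm:Main2}); no bilinear bookkeeping over $2^n$ monomials is required. Your approach works too, but note that your convergence justification is not quite right: the intermediate right-hand words, expanded in the $e$-basis, may end in $e_1$ (for instance $(e_z-e_1)^2$ contributes $e_1e_1$), so it is the decay supplied by the connector---not a trailing $e_z$---that keeps every connected sum with both sides non-empty finite. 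In the $\{x,y_0,y_1\}$ basis this issue disappears, since every summand is manifestly non-negative and the transport relations themselves certify finiteness.
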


\begin{rem}
In \cite{HIST}, $L$ is defined as a map from $\cA^0$ to 
the space of holomorphic functions on $\C\setminus[0,1]$ 
(note that our variable $z$ corresponds to their $z^{-1}$). 
Analytic continuation of $\Li^I_\bk(z)$ to $\C\setminus[1,\infty)$ 
is obtained by using an iterated integral expression. 
\end{rem}

Note that, unlike \eqref{eq:duality MZV}, 
Theorem \ref{thm:Main} does not give a one-to-one identity of one-variable MPLs in general. 
For instance, the relation 
\[L(e_1e_0^2)=L((e_z-e_1)^2(e_z-e_0)) \qquad \text{(\eqref{eq:Main} for $w=e_1e_0^2$)}\]
amounts to the linear relation 
\begin{align*}
\zeta(3)&=\Li_{1,1,1}(z,z,z)-\Li_{1,1,1}(z,1,z)-\Li_{1,1,1}(1,z,z)+\Li_{1,1,1}(1,1,z)\\
&\quad+\Li_{1,2}(z,z)-\Li_{1,2}(z,1)-\Li_{1,2}(1,z)+\zeta(1,2) 
\end{align*}
among nine one-variable MPLs. 
In fact, as is done below, it is possible to rephrase Theorem \ref{thm:Main} as a one-to-one identity 
just like \eqref{eq:duality MZV} by considering appropriate linear combinations of one-variable MPLs. 
This reformulation is necessary for our proof of Theorem \ref{thm:Main}. 

We call a pair $\tk=(k,\mu)\in\Z_{>0}\times\{0,1\}$ an \emph{augmented positive integer}, 
and a tuple $\tbk=(\tk_1,\ldots,\tk_r)$ of augmented positive integers an \emph{augmented index}. 
Such $\tbk$ is said \emph{admissible} if $r=0$ (we write $\tbk=\varnothing$ in this case), 
or $r>0$ and $\tk_r\ne(1,1)$. 
For an augmented index $\tbk=\bigl((k_1,\mu_1),\ldots,(k_r,\mu_r)\bigr)$, 
we define $w(\tbk)\in\cA$ by 
\[w(\tbk)\coloneqq y_{\mu_1}x^{k_1-1}\dots y_{\mu_r}x^{k_r-1}, \]
where 
\[x\coloneqq e_0,\quad y_0\coloneqq -e_z,\quad y_1\coloneqq e_z-e_1. \]
For $\tbk=\varnothing$, we understand $w(\varnothing)=1$. 
Then it is easy to see that $w(\tbk)$, with $\tbk$ running over all admissible augmented indices, 
form a basis of $\cA^0$. Moreover, for any admissible augmented index $\tbk$, 
there exists a unique admissible augmented index $\tbk^\dagger$, 
called the \emph{dual} of $\tbk$, such that $\tau\bigl(w(\tbk)\bigr)=w(\tbk^\dagger)$. 

For a non-empty admissible index $\tbk=\bigl((k_1,\mu_1),\ldots,(k_r,\mu_r)\bigr)$, 
we define 
\[\tLi(\tbk;z)\coloneqq \sum_{0=m_0<m_1<\dots<m_r} 
\prod_{i=1}^r\frac{\mu_i+(-1)^{\mu_i}z^{m_i-m_{i-1}}}{m_i^{k_i}}, \]
and set $\tLi(\varnothing;z)\coloneqq 1$. 
Then the map $L$ satisfies (and is determined by) the formula 
\[L\bigl(w(\tbk)\bigr)=\tLi(\tbk;z), \]
and hence Theorem \ref{thm:Main} is equivalent to the following: 

\begin{thm}\label{thm:Main2}
For any admissible augmented index $\tbk$, we have 
\[\tLi(\tbk;z)=\tLi(\tbk^\dagger;z). \]
\end{thm}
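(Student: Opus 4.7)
Following the connected-sum method of \cite{SY1}, the plan is to introduce a bilinear sum
\[
Z(\tbk;\tbl;z) \coloneqq \sum_{\substack{0=m_0<m_1<\cdots<m_r \\ 0=n_0<n_1<\cdots<n_s}} \prod_{i=1}^{r}\frac{\mu_i+(-1)^{\mu_i}z^{m_i-m_{i-1}}}{m_i^{k_i}}\cdot\prod_{j=1}^{s}\frac{\nu_j+(-1)^{\nu_j}z^{n_j-n_{j-1}}}{n_j^{l_j}}\cdot C(m_r,n_s;z),
\]
indexed by pairs of admissible augmented indices $\tbk=((k_i,\mu_i))_i$ and $\tbl=((l_j,\nu_j))_j$. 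Here the \emph{connector} $C(m,n;z)$ is a specific function of the two terminal summation variables $m_r,n_s$ and of $z$. As the abstract indicates, $C$ will be a Gauss-type hypergeometric series in $z$ whose parameters depend on $m$ and $n$, normalized so that $C(m,0;z)=C(0,n;z)=1$ for $m,n\ge 1$. This normalization yields the boundary values $Z(\tbk;\varnothing;z)=\tLi(\tbk;z)$ and $Z(\varnothing;\tbl;z)=\tLi(\tbl;z)$, reducing the theorem to a comparison between the two specializations of $Z$.

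With the boundary values in place, the main work is to prove three \emph{transport identities}, one for each letter of the alphabet $\{x,y_0,y_1\}$ underlying $\cA^0$. Since $\tau(x)=y_1$, $\tau(y_1)=x$ and $\tau(y_0)=y_0$, each identity should peel the rightmost letter off $w(\tbk)$ and graft its $\tau$-image onto the left end of $w(\tbl)$. At the level of sums, this corresponds to a functional equation for $C$ that exchanges a shift $m_r\to m_r-1$ on the $\tbk$-side for a shift $n_s\to n_s+1$ on the $\tbl$-side, transferring in the process a factor of the appropriate shape $m_r^{-1}\bigl(\mu+(-1)^{\mu}z^{\cdot}\bigr)$ into a factor $n_s^{-1}\bigl(\mu'+(-1)^{\mu'}z^{\cdot}\bigr)$ inside the new connector. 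Iterating the three transports converts $Z(\tbk;\varnothing;z)$ letter by letter into $Z(\varnothing;\tbk^\dagger;z)$, which is exactly the claimed identity $\tLi(\tbk;z)=\tLi(\tbk^\dagger;z)$.

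The principal obstacle is pinning down the connector. Unlike the MZV situation of \cite{SY1}, where a rational factor of Beta-function type suffices, the presence of the variable $z$ together with the two ``colors'' $\mu\in\{0,1\}$ forces $C(m,n;z)$ to be genuinely transcendental; worse, the requirement that \emph{three} distinct transports hold simultaneously for the same $C$ is highly constrained. Concretely, the swap $\tau(x)=y_1$, which must exchange $m_r^{-1}$ on one side for $n_s^{-1}(1-z^{\cdot})$ on the other, amounts to a non-trivial contiguous relation of a $_2F_1$, and this is what I expect to dictate the precise shape of $C$. The remaining step $\tau(y_0)=y_0$ is more symmetric but will still require care with the signs inherited from $y_0=-e_z$ and $y_1=e_z-e_1$. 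Once $C$ is correctly identified and the three transports verified by partial-fraction/Abel-summation manipulations, the rest of the argument is a clean bookkeeping of the $\tau$-action on words, and convergence of $Z$ is automatic from $\abs{z}<1$ together with the admissibility condition $\tk_r\ne(1,1)$.
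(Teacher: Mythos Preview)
Your strategy matches the paper's: a connected sum with a hypergeometric connector (the paper takes $C(m,n;z)=\dfrac{m!\,n!}{(m+n)!}\,F(m,n;m+n+1;z)$, with $F$ the Gauss hypergeometric series), boundary values, and transport relations that move letters across. A few points need correction before the plan will execute.

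First, the transport grafts the $\tau$-image onto the \emph{right} end of $w(\tbl)$, not the left. The connector only sees the terminal variables $m_r,n_s$, so prepending a new $n$-variable on the left would not interact with $C$ at all; and since $\tau$ is an anti-automorphism, peeling letters from the right of $w(\tbk)$ and appending their images to the right of $w(\tbl)$ is precisely what rebuilds $\tau(w(\tbk))$. Your description of the mechanism as a ``shift $m_r\to m_r-1$'' is likewise off: the actual identities sum over a \emph{new} terminal variable and telescope via contiguous relations for $F$, namely
\[
\sum_{a>m}\frac{z^{a-m}}{a}\,C(a,n;z)=\sum_{b>n}\frac{z^{b-n}}{b}\,C(m,b;z),\qquad
\sum_{a>m}\frac{1-z^{a-m}}{a}\,C(a,n;z)=\frac{1}{n}\,C(m,n;z).
\]
Second, only two transport relations are needed, not three: the one for $x$ follows from the one for $y_1$ by the obvious symmetry $Z(\tbk;\tbl;z)=Z(\tbl;\tbk;z)$. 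Third, the connected sum should be defined for \emph{all} augmented indices, since intermediate indices during transport need not be admissible; finiteness when both $\tbk$ and $\tbl$ are non-empty is itself a consequence of the transport relations rather than of admissibility.
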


\begin{rem}
Kaneko and Tsumura \cite{KT} study a version of multiple zeta values of level two defined by 
\[T(k_1,\ldots,k_r)\coloneqq 2^r\sum_{\substack{0<m_1<\dots<m_r\\ m_i\equiv i\bmod 2}}
\frac{1}{m_1^{k_1}\dots m_r^{k_r}}. \]
In our notation, this can be written as 
\[T(k_1,\ldots,k_r)=\tLi\bigl((k_1,1),\ldots,(k_r,1);-1\bigr), \]
and Theorem \ref{thm:Main2} is a generalization of the duality of $T$-values 
\cite[Theorem 3.1]{KT}. 
\end{rem}

We prove Theorem \ref{thm:Main2} in \S2. 
In \S3, we introduce two kinds of $q$-analogues of $\tLi(\tbk;z)$, 
and prove for each of them a generalization of Theorem \ref{thm:Main2}. 

\section{The proof of Theorem \ref{thm:Main2} via connected sums}
Let $F\bigl(\argstack{\alpha,\,\beta}{\gamma};z\bigr)$ denote the hypergeometric series 
\[F\biggl(\argstack{\alpha,\,\beta}{\gamma};z\biggr)
\coloneqq \sum_{n=0}^\infty\frac{(\alpha)_n\,(\beta)_n}{(\gamma)_n\,n!}z^n\]
where $(\alpha)_n\coloneqq \alpha(\alpha+1)\dots(\alpha+n-1)$ is the rising factorial. 
We use the following contiguous relations. 

\begin{lem}\label{lem:HG}
The hypergeometric series satisfies the identities 
\begin{align}
\label{eq:HG 1}
F\biggl(\argstack{\alpha,\,\beta}{\gamma};z\biggr)
&=F\biggl(\argstack{\alpha,\,\beta+1}{\gamma};z\biggr)
-z\,\frac{\alpha}{\gamma} F\biggl(\argstack{\alpha+1,\,\beta+1}{\gamma+1};z\biggr), \\
\label{eq:HG 2}
(\gamma-\alpha)F\biggl(\argstack{\alpha,\,\beta}{\gamma+1};z\biggr)
&=\gamma F\biggl(\argstack{\alpha,\,\beta}{\gamma};z\biggr)
-\alpha F\biggl(\argstack{\alpha+1,\,\beta}{\gamma+1};z\biggr). 
\end{align}
\end{lem}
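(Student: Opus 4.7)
My plan is to prove both identities by directly comparing the coefficients of $z^n$ on the two sides, using only the defining recursion $(\lambda)_n = \lambda\cdot(\lambda+1)_{n-1}$ for the Pochhammer symbol. Since everything in sight is a formal power series in $z$, this reduces each identity to a family of elementary rational identities in $\alpha$, $\beta$, $\gamma$.

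For \eqref{eq:HG 1}, I would first note that for $n\ge 1$ the coefficient of $z^n$ on the right-hand side is
\[
\frac{(\alpha)_n(\beta+1)_n}{(\gamma)_n\,n!}
-\frac{\alpha}{\gamma}\cdot\frac{(\alpha+1)_{n-1}(\beta+1)_{n-1}}{(\gamma+1)_{n-1}(n-1)!},
\]
where the second summand comes from the index shift caused by the prefactor $z$. Using $\alpha(\alpha+1)_{n-1}=(\alpha)_n$ and $\gamma(\gamma+1)_{n-1}=(\gamma)_n$, the second summand rewrites as $\frac{n\,(\alpha)_n(\beta+1)_{n-1}}{(\gamma)_n\,n!}$, so after pulling out the common factor $\frac{(\alpha)_n}{(\gamma)_n\,n!}$ the identity reduces to the scalar equality
\[
(\beta+1)_n - n\,(\beta+1)_{n-1} = (\beta)_n,
\]
which is immediate from $(\beta+1)_n=(\beta+1)_{n-1}(\beta+n)$ together with $(\beta)_n=\beta\,(\beta+1)_{n-1}$. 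The $n=0$ case is trivial, since both sides reduce to $1$.

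For \eqref{eq:HG 2}, I would equate coefficients of $z^n$ and divide through by the common factor $\frac{(\alpha)_n(\beta)_n}{(\gamma+1)_n\,n!}$, reducing the identity to
\[
\gamma-\alpha \;=\; \gamma\cdot\frac{(\gamma+1)_n}{(\gamma)_n}-\alpha\cdot\frac{(\alpha+1)_n}{(\alpha)_n}.
\]
Since $(\lambda+1)_n/(\lambda)_n=(\lambda+n)/\lambda$ for any parameter $\lambda$, the right-hand side collapses to $(\gamma+n)-(\alpha+n)=\gamma-\alpha$, as needed.

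Overall this is a routine coefficient comparison, and I do not expect any real obstacle. The only mildly subtle point lies in \eqref{eq:HG 1}: the prefactor $z$ in front of $F\bigl(\argstack{\alpha+1,\,\beta+1}{\gamma+1};z\bigr)$ shifts the series, yielding the $(n-1)!$ in the denominator, and this shift introduces precisely the factor of $n$ that makes the combinatorial identity $(\beta+1)_n-n(\beta+1)_{n-1}=(\beta)_n$ do the work.
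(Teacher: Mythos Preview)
Your argument is correct and is exactly the paper's approach: the paper also proves both identities by comparing coefficients of $z^n$, writing down precisely the two coefficient equalities you derive and leaving their verification to the reader. Your write-up simply supplies the elementary Pochhammer manipulations that the paper omits.
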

\begin{proof}
These are well known identities, see e.g.\ the equations (9.2.13) and (9.2.5) in \cite{Leb}. 
Indeed, they are shown by comparing the coefficients of $z^n$, that is, by checking 
\begin{align*}
\frac{(\alpha)_n\,(\beta)_n}{(\gamma)_n\,n!}
&=\frac{(\alpha)_n\,(\beta+1)_n}{(\gamma)_n\,n!}
-\frac{\alpha}{\gamma}\frac{(\alpha+1)_{n-1}\,(\beta+1)_{n-1}}{(\gamma+1)_{n-1}\,(n-1)!},\\
(\gamma-\alpha)\frac{(\alpha)_n\,(\beta)_n}{(\gamma+1)_n\,n!}
&=\gamma\frac{(\alpha)_n\,(\beta)_n}{(\gamma)_n\,n!}
-\alpha\frac{(\alpha+1)_n\,(\beta)_n}{(\gamma+1)_n\,n!}, 
\end{align*}
respectively. 
\end{proof}

From now on, we assume that $z$ is a real number with $0\leq z<1$, 
which is harmless to the proof of Theorem \ref{thm:Main2}. 
The following definition is the core of our proof of Theorem \ref{thm:Main2}. 

\begin{defn}
Let $\tbk=\bigl((k_1,\mu_1),\ldots,(k_r,\mu_r)\bigr)$ and 
$\tbl=\bigl((l_1,\nu_1),\ldots,(l_s,\nu_s)\bigr)$ be augmented indices (admissible or not). 
Then we define 
\begin{multline*}
\tLi(\tbk;\tbl;z)\coloneqq \sum_{\substack{0=m_0<m_1<\dots<m_r\\ 0=n_0<n_1<\dots<n_s}}
\prod_{i=1}^r\frac{\mu_i+(-1)^{\mu_i}z^{m_i-m_{i-1}}}{m_i^{k_i}}\\
\times\prod_{j=1}^s\frac{\nu_j+(-1)^{\nu_j}z^{n_j-n_{j-1}}}{n_j^{l_j}} C(m_r,n_s;z),
\end{multline*}
where 
\[C(m,n;z)\coloneqq \frac{m!\;n!}{(m+n)!}F\biggl(\argstack{m,\ n}{m+n+1};z\biggr). \]
By abuse of notation, we also write $\tLi\bigl(w(\tbk);w(\tbl);z\bigr)$ for $\tLi(\tbk;\tbl;z)$. 
For example, $\tLi(y_1x;y_0y_1;z)$ means $\tLi\bigl((2,1);(1,0),(1,1);z\bigr)$. 
\end{defn}

The value of the series $\tLi(\tbk;\tbl;z)$ is always well defined 
as a non-negative real number or the positive infinity since, by the assumption $0\leq z<1$, 
all terms are non-negative real numbers. 
As in \cite{SY1}, we call this series $\tLi(\tbk;\tbl;z)$ the \emph{connected sum} 
and the factor $C(m_r,n_s;z)$ the \emph{connector}: 
Notice that, without the connector, 
the sum splits into two independent sums with respect to $m_i$'s and $n_j$'s. 
The connected sum satisfies the obvious symmetry 
\begin{equation}\label{eq:symmetry}
\tLi(\tbk;\tbl;z)=\tLi(\tbl;\tbk;z), 
\end{equation}
the \emph{boundary condition} 
\begin{equation}\label{eq:boundary}
\tLi(\tbk;\varnothing;z)=\tLi(\tbk;z)=\tLi(\varnothing;\tbk;z), 
\end{equation}
and the following \emph{transport relations}: 

\begin{prop}\label{prop:transport}
For any augmented indices $\tbk$ and $\tbl$, we have 
\begin{equation}\label{eq:transport 1}
\tLi\bigl(w(\tbk)y_0;w(\tbl);z\bigr)=\tLi\bigl(w(\tbk);w(\tbl)y_0;z\bigr). 
\end{equation}
If $\tbl$ is non-empty, we also have 
\begin{equation}\label{eq:transport 2}
\tLi\bigl(w(\tbk)y_1;w(\tbl);z\bigr)=\tLi\bigl(w(\tbk);w(\tbl)x;z\bigr).  
\end{equation}
\end{prop}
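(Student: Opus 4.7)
Both transport relations share a common structure, and my plan is to prove each by reducing it to a local identity in the connector $C(m, n; z)$ and then establishing that identity via the Euler integral representation of the hypergeometric function.

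First I would fix all the summation variables coming from $\tbk$ and $\tbl$, and abbreviate $a = m_r$ (with $a = 0$ if $\tbk = \varnothing$) and $b = n_s$ (with $b = 0$ if $\tbl = \varnothing$). All factors involving the inner variables are then common to both sides, so \eqref{eq:transport 1} reduces to
\[
\sum_{m > a} \frac{z^{m-a}}{m} C(m, b; z) = \sum_{n > b} \frac{z^{n-b}}{n} C(a, n; z),
\]
and \eqref{eq:transport 2} reduces to
\[
\sum_{m > a} \frac{1 - z^{m-a}}{m} C(m, b; z) = \frac{1}{b} C(a, b; z).
\]
The second identity requires $b \geq 1$, which is exactly the hypothesis $\tbl \neq \varnothing$.

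Next I would use Euler's integral representation of $F$ to write
\[
C(m, b; z) = m \int_0^1 t^{m-1} \Bigl(\frac{1-t}{1-zt}\Bigr)^{\!b} dt.
\]
Substituting this into the first local identity and summing the resulting geometric series in $m$ yields $z \int_0^1 t^a (1-t)^b (1-zt)^{-b-1}\,dt$ on the left and $z \int_0^1 u^b (1-u)^a (1-zu)^{-a-1}\,du$ on the right; these integrands are swapped by the involution $u = (1-t)/(1-zt)$ of $[0,1]$ (whose inverse is $t = (1-u)/(1-zu)$), which proves the first identity. The same rewriting turns the second local identity into
\[
(1-z) \int_0^1 \frac{t^a (1-t)^{b-1}}{(1-zt)^{b+1}} dt = \frac{a}{b} \int_0^1 \frac{t^{a-1}(1-t)^b}{(1-zt)^b} dt,
\]
which follows from integration by parts with $dv = t^{a-1}\,dt$, using $\frac{d}{dt}\bigl((1-t)^b (1-zt)^{-b}\bigr) = -b(1-z)(1-t)^{b-1}(1-zt)^{-b-1}$ and the vanishing of the boundary terms.

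The main obstacle is that this route invokes Euler's integral representation, whereas the paper has introduced only the contiguous relations of Lemma \ref{lem:HG}. A purely algebraic version should nevertheless be possible: expand $C(m, b; z)$ as a power series in $z$ via the series definition of $F$, apply summation-by-parts in the $m$-variable with the contiguous relation \eqref{eq:HG 2} supplying the needed finite-difference identity, and then use the Beta-function identity $\frac{m! n!}{(m+n+1)!} = \int_0^1 t^m (1-t)^n dt$ to repackage the resulting double sum. Identifying the correct telescoping pattern is the delicate part; the integral route sidesteps it at the cost of invoking one extra fact about $F$.
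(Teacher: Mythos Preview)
Your reduction to the two local identities in the connector is exactly what the paper does, and your integral computations are correct. One small caveat: the integration-by-parts step for the second identity, as written with $dv=t^{a-1}\,dt$, tacitly assumes $a\ge 1$; when $a=0$ (i.e.\ $\tbk=\varnothing$) you should either treat that case directly, or integrate by parts the other way with $U=t^a$ and $V=-\tfrac{1}{b}\bigl(\tfrac{1-t}{1-zt}\bigr)^{b}$, so that the boundary term at $t=0$ produces the required value $\tfrac{1}{b}=\tfrac{1}{b}\,C(0,b;z)$.

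The paper's route is genuinely different. Rather than invoking Euler's integral, it uses only the contiguous relations of Lemma~\ref{lem:HG}: relation \eqref{eq:HG 1} turns $\sum_{m>a}\frac{z^{m-a}}{m}C(m,b;z)$ into a telescoping sum that collapses to the closed form $z\,\frac{a!\,b!}{(a+b+1)!}\,F\bigl(\argstack{a+1,\,b+1}{a+b+2};z\bigr)$, which is visibly symmetric in $a$ and $b$; relation \eqref{eq:HG 2} similarly telescopes $\sum_{m>a}\frac{1}{m}C(m,b;z)$, and one more use of \eqref{eq:HG 1} closes the second identity. Your involution $u=(1-t)/(1-zt)$ is more geometric and explains \emph{why} the symmetry is there, but it reintroduces an integral---exactly what the connected-sum method is designed to avoid, the original Hirose--Iwaki--Sato--Tasaka proof having already gone through iterated integrals. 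More concretely, the paper's purely series-level telescoping transfers verbatim to the $q$-setting of \S\ref{prop:q transport} via the $q$-contiguous relations \eqref{eq:q1HG 1}--\eqref{eq:q2HG 2}, whereas your argument would first need a $q$-integral representation of $\phi_q$.
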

\begin{proof}
In order to prove the identity \eqref{eq:transport 1}, it suffices to show 
\begin{equation}\label{eq:connector 1}
\sum_{a>m} \frac{z^{a-m}}{a}C(a,n;z)=\sum_{b>n} \frac{z^{b-n}}{b}C(m,b;z)
\end{equation}
for any integers $m,n\geq 0$. 
By using \eqref{eq:HG 1}, we compute the left hand side of \eqref{eq:connector 1} as follows: 
\begin{align}
\notag&\sum_{a>m} z^{a-m}\frac{(a-1)!\,n!}{(a+n)!}F\biggl(\argstack{a,\,n}{a+n+1};z\biggr)\\
\notag&=\sum_{a>m}^\infty z^{a-m}\frac{(a-1)!\,n!}{(a+n)!}
\Biggl\{F\biggl(\argstack{a,\,n+1}{a+n+1};z\biggr)
-z\frac{a}{a+n+1}F\biggl(\argstack{a+1,\,n+1}{a+n+2};z\biggr)\Biggr\}\\
\notag&=\sum_{a>m}\Biggl\{z^{a-m}\frac{(a-1)!\,n!}{(a+n)!}F\biggl(\argstack{a,\,n+1}{a+n+1};z\biggr)\\
\notag&\hspace{140pt}
-z^{a+1-m}\frac{a!\,n!}{(a+1+n)!}F\biggl(\argstack{a+1,\,n+1}{a+1+n+1};z\biggr)\Biggr\}\\
\label{eq:connector 1.5}
&=z\frac{m!\,n!}{(m+n+1)!}F\biggl(\argstack{m+1,\,n+1}{m+n+2};z\biggr). 
\end{align}
Since the last expression is symmetric with respect to $m$ and $n$, 
we obtain \eqref{eq:connector 1}. 

In order to prove the identity \eqref{eq:transport 2}, it suffices to show 
\begin{equation}\label{eq:connector 2}
\sum_{a>m} \frac{1-z^{a-m}}{a}C(a,n;z)=\frac{1}{n}C(m,n;z)
\end{equation}
for any integers $m\geq 0$ and $n>0$. By using \eqref{eq:HG 2}, we see that 
\begin{align*}
&\sum_{a>m} \frac{1}{a}C(a,n;z)\\
&=\sum_{a>m}\frac{(a-1)!\,n!}{(a+n)!}F\biggl(\argstack{a,\,n}{a+n+1};z\biggr)\\
&=\sum_{a>m}\frac{(a-1)!\,(n-1)!}{(a+n)!}\Biggr\{(a+n)F\biggl(\argstack{a,\,n}{a+n};z\biggr)
-aF\biggl(\argstack{a+1,\,n}{a+n+1};z\biggr)\Biggr\}\\
&=\sum_{a>m}\Biggr\{\frac{(a-1)!\,(n-1)!}{(a-1+n)!}F\biggl(\argstack{a,\,n}{a+n};z\biggr)
-\frac{a!\,(n-1)!}{(a+n)!}F\biggl(\argstack{a+1,\,n}{a+n+1};z\biggr)\Biggr\}\\
&=\frac{m!\,(n-1)!}{(m+n)!}F\biggl(\argstack{m+1,\,n}{m+n+1};z\biggr). 
\end{align*}
This and \eqref{eq:connector 1.5} imply that 
\begin{align*}
&\sum_{a>m} \frac{1-z^{a-m}}{a}C(a,n;z)\\
&=\frac{m!\,(n-1)!}{(m+n)!}F\biggl(\argstack{m+1,\,n}{m+n+1};z\biggr)
-z\frac{m!\,n!}{(m+n+1)!}F\biggl(\argstack{m+1,\,n+1}{m+n+2};z\biggr)\\
&=\frac{m!\,(n-1)!}{(m+n)!}F\biggl(\argstack{m,\,n}{m+n+1};z\biggr)=\frac{1}{n}C(m,n;z), 
\end{align*}
by \eqref{eq:HG 1} again. Thus we obtain \eqref{eq:connector 2}. 
\end{proof}

Now we turn to the proof of  Theorem \ref{thm:Main2}. 

\begin{proof}[Proof of Theorem \ref{thm:Main2}]
By the transport relations \eqref{eq:transport 1} and \eqref{eq:transport 2} 
combined with the symmetry \eqref{eq:symmetry}, we have 
\[\tLi\bigl(w(\tbk)u;w(\tbl);z\bigr)=\tLi\bigl(w(\tbk);w(\tbl)\tau(u);z\bigr)\]
for $u\in\{x,y_0,y_1\}$. By using it repeatedly, we obtain 
\[\tLi\bigl(w(\tbk);1;z\bigr)=\tLi\bigl(1;\tau(w(\tbk));z\bigr) \]
for any admissible augmented index $\tbk$. 
By the boundary condition \eqref{eq:boundary}, this is exactly what we have to show. 
\end{proof}

\begin{rem}
As a by-product of the above discussion, one sees that 
\[\tLi\bigl(\tbk;\tbl;z)=\infty\iff 
\text{one of $\tbk$ and $\tbl$ is empty and the other is not admissible. }\]
Indeed, the case in which $\tbk$ or $\tbl$ is empty is easy. 
When both $\tbk$ and $\tbl$ are non-empty, one can use transport relations 
to obtain $\tLi(\tbk;\tbl;z)=\tLi(\varnothing;\tbl{}';z)$ for some admissible $\tbl{}'$, 
which shows that $\tLi(\tbk;\tbl;z)<\infty$. 
\end{rem}

\section{$q$-analogues}
There are quite many versions of $q$-analogues of multiple zeta values 
and results on them (see \cite[Chap.~12]{ZhaoBook} for a general account). 
The duality relation and its generalization to the Ohno-type sums for the Bradley--Zhao model 
\[\zeta_q^{\mathrm{BZ}}(\bk)\coloneqq 
\sum_{0<m_1<\dots<m_r}\frac{q^{(k_1-1)m_1+\dots+(k_r-1)m_r}}{[m_1]^{k_1}\dots[m_r]^{k_r}}\]
($[m]=[m]_q\coloneqq (1-q^m)/(1-q)$ denotes the $q$-integer) 
was proved by Bradley \cite[Theorem 5]{Bra}, and 
its proof via connected sums was given in \cite{SY1}. 
More recently, Brindle \cite{Bri} further developed the latter proof to 
include the duality \cite[Theorem 12.3.2]{ZhaoBook} for the Schlesinger--Zudilin model 
\[\zeta_q^{\mathrm{SZ}}(\bk)\coloneqq 
\sum_{0<m_1<\dots<m_r}\frac{q^{k_1m_1+\dots+k_rm_r}}{[m_1]^{k_1}\dots[m_r]^{k_r}}. \]

In this paper, we consider the following two kinds of $q$-analogues of $\tLi(\tbk;z)$. 

\begin{defn}
For a parameter $q$ with $0<q<1$ and an admissible augmented index 
$\tbk=\bigl((k_1,\mu_1),\ldots,(k_r,\mu_r)\bigr)$, we define  
\begin{align*}
\tLi_q^{(1)}(\tbk;z)&\coloneqq \sum_{0=m_0<m_1<\dots<m_r} 
\prod_{i=1}^r\frac{q^{(k_i-1)m_i}(\mu_i+(-1)^{\mu_i}q^{m_{i-1}}z^{m_i-m_{i-1}})}{[m_i]^{k_i}}, \\
\tLi_q^{(2)}(\tbk;z)&\coloneqq \sum_{0=m_0<m_1<\dots<m_r} 
\prod_{i=1}^r\frac{\mu_iq^{m_i}+(-1)^{\mu_i}z^{m_i-m_{i-1}}}{[m_i]^{k_i}}. 
\end{align*}
\end{defn}

Both of these $q$-analogues satisfy the same duality relation as Theorem \ref{thm:Main2}. 

\begin{thm}\label{thm:Main3}
For any admissible augmented index $\tbk$ and $\epsilon=1,2$, we have 
\[\tLi^{(\epsilon)}_q(\tbk;z)=\tLi^{(\epsilon)}_q(\tbk^\dagger;z). \]
\end{thm}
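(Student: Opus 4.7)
The plan is to mimic the connected sum proof of Theorem \ref{thm:Main2} for each $\epsilon \in \{1,2\}$. For each value of $\epsilon$ I would introduce a two-sided $q$-connected sum
\[\tLi_q^{(\epsilon)}(\tbk;\tbl;z)\coloneqq \sum_{\substack{0=m_0<\dots<m_r\\ 0=n_0<\dots<n_s}} P^{(\epsilon)}_\tbk(\bm m;z)\, P^{(\epsilon)}_\tbl(\bm n;z)\, C_q^{(\epsilon)}(m_r,n_s;z),\]
where $P^{(\epsilon)}_\tbk$ denotes the summand of $\tLi_q^{(\epsilon)}(\tbk;z)$ and $C_q^{(\epsilon)}$ is a $q$-analogue of the hypergeometric connector $C(m,n;z)$, to be chosen so that the symmetry $\tLi_q^{(\epsilon)}(\tbk;\tbl;z)=\tLi_q^{(\epsilon)}(\tbl;\tbk;z)$, the boundary condition $\tLi_q^{(\epsilon)}(\tbk;\varnothing;z)=\tLi_q^{(\epsilon)}(\tbk;z)$ (which forces $C_q^{(\epsilon)}(m,0;z)=1$), and a $q$-analogue of Proposition \ref{prop:transport} all hold. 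Once these are established, the same formal argument as in the proof of Theorem \ref{thm:Main2}---repeatedly transporting letters $x,y_0,y_1$ across the connector via the transport relations and the symmetry, while applying $\tau$---gives $\tLi_q^{(\epsilon)}(\tbk;z)=\tLi_q^{(\epsilon)}(\tbk^\dagger;z)$.

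The natural candidate for the connector is built from the basic hypergeometric series:
\[C_q^{(\epsilon)}(m,n;z)=\frac{(q;q)_m(q;q)_n}{(q;q)_{m+n}}\cdot {}_2\phi_1\!\biggl(\genfrac{}{}{0pt}{}{q^m,\ q^n}{q^{m+n+1}};q,\alpha_\epsilon z\biggr),\]
where $\alpha_\epsilon$ is a monomial in $q$ (possibly depending on $m,n$) that may differ for $\epsilon=1,2$. In the limit $q\to 1$ this reduces, up to the customary rescaling, to the classical $C(m,n;z)$ of \S2, so one expects the transport relations to follow from $q$-analogues of \eqref{eq:HG 1} and \eqref{eq:HG 2}, i.e.\ from Heine's three-term contiguous identities for ${}_2\phi_1$. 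The verification of each transport relation then amounts, after cancelling common factors, to a single identity on the connector that matches one of these contiguous relations.

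The main obstacle I foresee is that the two $q$-models distribute their $q$-powers quite differently: in $\tLi_q^{(1)}$ the weight $q^{(k_i-1)m_i}$ sits \emph{outside} the bracket while $q^{m_{i-1}}z^{m_i-m_{i-1}}$ sits \emph{inside}, whereas in $\tLi_q^{(2)}$ only the $\mu_i$-term carries the factor $q^{m_i}$. Consequently, when one performs the inner sum over $a>m_r$ (corresponding to transporting a single letter across the connector), the surviving $q$-weight combines with $C_q^{(\epsilon)}$ differently in the two cases. I therefore expect the two models to require genuinely different choices of $\alpha_\epsilon$, and possibly a mild renormalisation of the overall prefactor, so as to line up each transport identity with the appropriate Heine contiguous relation. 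This calibration, and checking both the $y_0$- and $y_1$-transport separately for each of $\epsilon=1,2$, is the technical heart of the argument.

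Granting the correct connectors, the endgame is purely formal: for any admissible augmented index $\tbk$, write $w(\tbk)=u_N\cdots u_1$ with $u_j\in\{x,y_0,y_1\}$, and apply the $q$-transport relation together with the symmetry once for each letter to obtain
\[\tLi_q^{(\epsilon)}\bigl(w(\tbk);1;z\bigr)=\tLi_q^{(\epsilon)}\bigl(1;\tau(u_N)\cdots\tau(u_1);z\bigr)=\tLi_q^{(\epsilon)}\bigl(1;w(\tbk^\dagger);z\bigr).\]
The boundary condition applied to both ends then yields $\tLi_q^{(\epsilon)}(\tbk;z)=\tLi_q^{(\epsilon)}(\tbk^\dagger;z)$, which is Theorem \ref{thm:Main3}.
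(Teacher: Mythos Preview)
Your plan is correct and is essentially the paper's own approach: the paper defines the connectors $C_q^{(1)}(m,n;z)=q^{mn}\dfrac{[m]!\,[n]!}{[m+n]!}\,\phi_q\Bigl(\argstack{q^m,\,q^n}{q^{m+n+1}};z\Bigr)$ and $C_q^{(2)}(m,n;z)=\dfrac{[m]!\,[n]!}{[m+n]!}\,\phi_q\Bigl(\argstack{q^m,\,q^n}{q^{m+n+1}};qz\Bigr)$, so your anticipated ``mild renormalisation'' is precisely the extra factor $q^{mn}$ for $\epsilon=1$ and the shift $z\mapsto qz$ for $\epsilon=2$. The transport relations are then verified via four contiguous identities for $\phi_q$ (one pair for each $\epsilon$), after which the formal endgame is exactly as you describe.
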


\begin{rem}
Let $\tbk=\bigl((k_1,\mu_1),\ldots,(k_r,\mu_r)\bigr)$ be an admissible augmented index 
with $\mu_i=1$ for all $i=1,\ldots,r$, and $\bk=(k_1,\ldots,k_r)$ the corresponding usual index. 
Then the dual $\tbk^\dagger$ of $\tbk$ has the same property and 
the corresponding index is $\bk^\dagger$, the dual of $\bk$. 
Note also that 
\begin{align*}
\tLi_q^{(1)}(\tbk;0)&=\sum_{0=m_0<m_1<\dots<m_r} \prod_{i=1}^r\frac{q^{(k_i-1)m_i}}{[m_i]^{k_i}}
=\zeta_q^{\mathrm{BZ}}(\bk),\\
\tLi_q^{(2)}(\tbk;0)&=\sum_{0=m_0<m_1<\dots<m_r} \prod_{i=1}^r\frac{q^{m_i}}{[m_i]^{k_i}}
=\zeta_q^{(1,\ldots,1)}(\bk), 
\end{align*}
where the last expression uses Zhao's general notation \cite[(11.1)]{ZhaoBook}. 
Therefore, Theorem \ref{thm:Main3} includes the duality relation for $\zeta_q^{\mathrm{BZ}}(\bk)$ 
and $\zeta_q^{(1,\ldots,1)}(\bk)$. 
\end{rem}

We prove Theorem \ref{thm:Main3} along the same line as Theorem \ref{thm:Main2}. 
Put $[n]!\coloneqq [1]\dots[n]$ and let $\phi_q\bigl(\argstack{\alpha,\beta}{\gamma};z\bigr)$ 
denote the $q$-hypergeometric series 
\[\phi_q\biggl(\argstack{\alpha,\beta}{\gamma};z\biggr)
\coloneqq \sum_{n=0}^\infty \frac{(\alpha;q)_n(\beta;q)_n}{(\gamma;q)_n(q;q)_n}z^n, \]
where $(\alpha;q)_n\coloneqq (1-\alpha)(1-q\alpha)\dots(1-q^{n-1}\alpha)$ is 
the $q$-shifted factorial. 
We need the following contiguous relations.

\begin{lem}
The $q$-hypergeometric series satisfies the identities 
\begin{align}
\label{eq:q1HG 1}
\phi_q\biggl(\argstack{\alpha,\,\beta}{\gamma};z\biggr)
&=\phi_q\biggl(\argstack{\alpha,\,q\beta}{\gamma};z\biggr)
-z\,\frac{(1-\alpha)\beta}{1-\gamma}\phi_q\biggl(\argstack{q\alpha,\,q\beta}{q\gamma};z\biggr), \\
\label{eq:q1HG 2}
(\alpha-\gamma)\phi_q\biggl(\argstack{\alpha,\,\beta}{q\gamma};z\biggr)
&=(1-\gamma)\alpha\,\phi_q\biggl(\argstack{\alpha,\,\beta}{\gamma};z\biggr)
-(1-\alpha)\gamma\,\phi_q\biggl(\argstack{q\alpha,\,\beta}{q\gamma};z\biggr), \\
\label{eq:q2HG 1}
\phi_q\biggl(\argstack{\alpha,\,\beta}{\gamma};qz\biggr)
&=\phi_q\biggl(\argstack{\alpha,\,q\beta}{\gamma};z\biggr)
-z\,\frac{1-\alpha}{1-\gamma} \phi_q\biggl(\argstack{q\alpha,\,q\beta}{q\gamma};z\biggr), \\
\label{eq:q2HG 2}
(\alpha-\gamma)\phi_q\biggl(\argstack{\alpha,\,\beta}{q\gamma};qz\biggr)
&=(1-\gamma)\phi_q\biggl(\argstack{\alpha,\,\beta}{\gamma};z\biggr)
-(1-\alpha)\phi_q\biggl(\argstack{q\alpha,\,\beta}{q\gamma};z\biggr). 
\end{align}
\end{lem}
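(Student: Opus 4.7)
The plan is to mimic Lemma \ref{lem:HG}: for each identity I would compare the coefficients of $z^n$ on the two sides and verify the resulting equality by elementary manipulation of $q$-shifted factorials. The basic tools are the recursions
\[(x;q)_n = (1-x)(qx;q)_{n-1} = (1-q^{n-1}x)(x;q)_{n-1}, \quad (q;q)_n = (1-q^n)(q;q)_{n-1},\]
together with their immediate consequence $(1-x)(qx;q)_n = (1-q^n x)(x;q)_n$.

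For \eqref{eq:q1HG 1}, the decisive algebraic fact is $(1-\beta) = (1-q^n\beta) - \beta(1-q^n)$, which, substituted into $(\beta;q)_n = (1-\beta)(q\beta;q)_{n-1}$, yields
\[(\beta;q)_n = (q\beta;q)_n - \beta(1-q^n)(q\beta;q)_{n-1}.\]
Dividing by $(\gamma;q)_n(q;q)_n$, using $(1-q^n)(q;q)_{n-1} = (q;q)_n$, and converting the $q\alpha$, $q\gamma$ factors in the shifted Pochhammer symbols via the second recursion reproduces exactly the coefficient identity encoded by \eqref{eq:q1HG 1}. Identity \eqref{eq:q2HG 1} is its $z\mapsto qz$ variant: the extra $q^n$ on the left is absorbed by the parallel decomposition $q^n(\beta;q)_n = (q\beta;q)_n - (1-q^n)(q\beta;q)_{n-1}$, which comes from $q^n(1-\beta) = (1-q^n\beta) - (1-q^n)$.

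For \eqref{eq:q1HG 2}, after extracting $z^n$-coefficients, pulling out the common factor $(\alpha;q)_n(\beta;q)_n/(q;q)_n$, and applying the shift identities
\[(q\alpha;q)_n = \frac{1-q^n\alpha}{1-\alpha}(\alpha;q)_n, \qquad \frac{1-\gamma}{(\gamma;q)_n} = \frac{1-q^n\gamma}{(q\gamma;q)_n},\]
the statement reduces to the elementary identity $\alpha(1-q^n\gamma) - \gamma(1-q^n\alpha) = \alpha - \gamma$. Identity \eqref{eq:q2HG 2} follows by the same reduction, with the $z\mapsto qz$ substitution inserting matching $q^n$-factors on both sides that cancel out of the same algebraic identity. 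The main obstacle is not conceptual but purely bookkeeping: the four combinations of shifts in $\beta$ or $\gamma$ crossed with $z$ or $qz$ invite transcription errors, so I would organize the write-up by isolating the two underlying algebraic identities first and then deriving all four contiguous relations from them in parallel.
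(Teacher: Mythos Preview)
Your proposal is correct and follows exactly the approach the paper takes: the paper's proof is a single sentence, ``Again, they are shown by comparing the coefficients of $z^n$,'' and your coefficient computations carry this out correctly. Your write-up in fact supplies considerably more detail than the paper does; the only minor imprecision is in your description of \eqref{eq:q2HG 2}, where the $q^n$ appears only on the left and the reduced identity is $(1-q^n\gamma)-(1-q^n\alpha)=q^n(\alpha-\gamma)$ rather than literally the same one as for \eqref{eq:q1HG 2}, but this is cosmetic.
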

\begin{proof}
Again, they are shown by comparing the coefficients of $z^n$. 
\end{proof}

\begin{defn}
We define the connected sums 
\begin{multline*}
\tLi_q^{(1)}(\tbk;\tbl;z)
\coloneqq \sum_{\substack{0=m_0<m_1<\dots<m_r\\ 0=n_0<n_1<\dots<n_s}}
\prod_{i=1}^r\frac{q^{(k_i-1)m_i}(\mu_i+(-1)^{\mu_i}q^{m_{i-1}}z^{m_i-m_{i-1}})}{[m_i]^{k_i}}\\
\times\prod_{j=1}^r\frac{q^{(l_j-1)n_j}(\nu_j+(-1)^{\nu_j}q^{n_{j-1}}z^{n_j-n_{j-1}})}{[n_j]^{l_j}}
C_q^{(1)}(m_r,n_s;z) 
\end{multline*}
and 
\begin{multline*}
\tLi_q^{(2)}(\tbk;\tbl;z)
\coloneqq \sum_{\substack{0=m_0<m_1<\dots<m_r\\ 0=n_0<n_1<\dots<n_s}}
\prod_{i=1}^r\frac{\mu_iq^{m_i}+(-1)^{\mu_i}z^{m_i-m_{i-1}}}{[m_i]^{k_i}}\\
\times\prod_{j=1}^r\frac{\nu_jq^{n_j}+(-1)^{\nu_j}z^{n_j-n_{j-1}}}{[n_j]^{l_j}}
C_q^{(2)}(m_r,n_s;z), 
\end{multline*}
with the connectors 
\[C_q^{(1)}(m,n;z)\coloneqq q^{mn}\frac{[m]!\,[n]!}{[m+n]!}\,
\phi_q\biggl(\argstack{q^m,q^n}{q^{m+n+1}};z\biggr) \]
and 
\[C_q^{(2)}(m,n;z)\coloneqq \frac{[m]!\,[n]!}{[m+n]!}\,
\phi_q\biggl(\argstack{q^m,q^n}{q^{m+n+1}};qz\biggr), \]
respectively. We also use the word notation 
\[\tLi_q^{(\epsilon)}\bigl(w(\tbk);w(\tbl);z\bigr)\coloneqq\tLi_q^{(\epsilon)}(\tbk;\tbl;z). \]
\end{defn}

The symmetry 
\begin{equation}\label{eq:q symmetry}
\tLi_q^{(\epsilon)}(\tbk;\tbl;z)=\tLi_q^{(\epsilon)}(\tbl;\tbk;z)
\end{equation}
and the boundary condition 
\begin{equation}\label{eq:q boundary}
\tLi_q^{(\epsilon)}(\tbk;\varnothing;z)=\tLi_q^{(\epsilon)}(\tbk;z)
=\tLi_q^{(\epsilon)}(\varnothing;\tbk;z)
\end{equation}
are obvious. Furthermore, we have the following transport relations. 

\begin{prop}\label{prop:q transport}
For any augmented indices $\tbk$ and $\tbl$ and $\epsilon=1,2$, we have 
\begin{equation}\label{eq:q transport 1}
\tLi_q^{(\epsilon)}\bigl(w(\tbk)y_0;w(\tbl);z\bigr)=\tLi_q^{(\epsilon)}\bigl(w(\tbk);w(\tbl)y_0;z\bigr). 
\end{equation}
If $\tbl$ is non-empty, we also have 
\begin{equation}\label{eq:q transport 2}
\tLi_q^{(\epsilon)}\bigl(w(\tbk)y_1;w(\tbl);z\bigr)=\tLi_q^{(\epsilon)}\bigl(w(\tbk);w(\tbl)x;z\bigr).  
\end{equation}
\end{prop}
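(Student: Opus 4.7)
The plan is to mirror the proof of Proposition \ref{prop:transport} step by step, replacing each hypergeometric contiguous relation by its $q$-analogue from the preceding lemma. For each $\epsilon\in\{1,2\}$, I would first unpack \eqref{eq:q transport 1} and \eqref{eq:q transport 2} using the definitions of the connected sums; for $\epsilon=1$ these reduce to the pair of identities
\begin{align*}
\sum_{a>m}\frac{q^m z^{a-m}}{[a]}C_q^{(1)}(a,n;z)&=\sum_{b>n}\frac{q^n z^{b-n}}{[b]}C_q^{(1)}(m,b;z), \\
\sum_{a>m}\frac{1-q^m z^{a-m}}{[a]}C_q^{(1)}(a,n;z)&=\frac{q^n}{[n]}C_q^{(1)}(m,n;z),
\end{align*}
and for $\epsilon=2$ to the analogous pair
\begin{align*}
\sum_{a>m}\frac{z^{a-m}}{[a]}C_q^{(2)}(a,n;z)&=\sum_{b>n}\frac{z^{b-n}}{[b]}C_q^{(2)}(m,b;z), \\
\sum_{a>m}\frac{q^a-z^{a-m}}{[a]}C_q^{(2)}(a,n;z)&=\frac{1}{[n]}C_q^{(2)}(m,n;z),
\end{align*}
the different shapes reflecting the different forms of the $y_\mu$-factors in $\tLi_q^{(1)}$ versus $\tLi_q^{(2)}$.

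Next, to evaluate the $z^{a-m}$-weighted sums on the left, I would insert \eqref{eq:q1HG 1} (for $\epsilon=1$) or \eqref{eq:q2HG 1} (for $\epsilon=2$) into $C_q^{(\epsilon)}(a,n;z)$ with parameters $\alpha=q^a$, $\beta=q^n$, $\gamma=q^{a+n+1}$. After simplifying via the identity $1-q^k=(1-q)[k]$, the resulting two-term expression telescopes across $a$ and collapses to a single boundary contribution at $a=m+1$ that is manifestly symmetric in $m$ and $n$; this yields the first identity of each pair via the symmetry \eqref{eq:q symmetry}. For the bare sums $\sum_{a>m}[a]^{-1}C_q^{(1)}(a,n;z)$ and $\sum_{a>m}q^a[a]^{-1}C_q^{(2)}(a,n;z)$ occurring in the second identity of each pair, I would apply \eqref{eq:q1HG 2} and \eqref{eq:q2HG 2} respectively with $\gamma=q^{a+n}$, so that the shifted $q\gamma$ matches the denominator in $C_q^{(\epsilon)}(a,n;z)$; each again telescopes to a single boundary term involving $\phi_q\bigl(\argstack{q^{m+1},\,q^n}{q^{m+n+1}};\cdot\bigr)$. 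Subtracting produces a two-term combination of $\phi_q$-values, which I would then collapse into $C_q^{(\epsilon)}(m,n;z)$ by a final invocation of \eqref{eq:q1HG 1} or \eqref{eq:q2HG 1} in the form symmetric in $(\alpha,\beta)$, applied with $\alpha=q^m$, $\beta=q^n$, $\gamma=q^{m+n+1}$.

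The main obstacle is bookkeeping rather than anything conceptual. The subtlety specific to $\epsilon=2$ is that $C_q^{(2)}$ involves $\phi_q(\,\cdot\,;qz)$ while the contiguous relations \eqref{eq:q2HG 1}, \eqref{eq:q2HG 2} translate between evaluations at $qz$ and at $z$; one therefore has to pick exactly the right relation at each step and carefully track the argument shift together with all the $q$-power prefactors that arise from rearranging the $q$-Pochhammer symbols. This is also the reason that two distinct pairs of contiguous relations are introduced in the preceding lemma, one pair for each value of $\epsilon$.
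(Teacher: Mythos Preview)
Your proposal is correct and follows exactly the same approach as the paper's proof: reducing to the same four connector identities, evaluating the weighted sums by applying \eqref{eq:q1HG 1}/\eqref{eq:q2HG 1} and \eqref{eq:q1HG 2}/\eqref{eq:q2HG 2} to create telescoping sums, and closing with a final use of \eqref{eq:q1HG 1}/\eqref{eq:q2HG 1}. In fact you spell out the $\epsilon=2$ case (including the $qz$ versus $z$ argument-tracking) more explicitly than the paper, which dismisses it with ``almost the same argument works.''
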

\begin{proof}
First let $\epsilon=1$. The first relation \eqref{eq:q transport 1} follows from the identity 
\begin{equation}\label{eq:q connector 1.5}
\sum_{a>m}\frac{q^mz^{a-m}}{[a]}C_q^{(1)}(a,n;z)
=zq^{mn+m+n}\frac{[m]!\,[n]!}{[m+n+1]!}\phi_q\biggl(\argstack{q^{m+1},\,q^{n+1}}{q^{m+n+2}};z\biggr), 
\end{equation}
which is a symmetric expression with respect to $m$ and $n$. 
We can show \eqref{eq:q connector 1.5} by using \eqref{eq:q1HG 1} 
and making a telescoping sum. For the proof of the second relation \eqref{eq:q transport 2}, 
we use \eqref{eq:q1HG 2} and make a telescoping sum to see 
\[\sum_{a>m}\frac{1}{[a]}C_q^{(1)}(a,n;z)
=q^{(m+1)n}\frac{[m]!\,[n-1]!}{[m+n]!}\phi_q\biggl(\argstack{q^{m+1},\,q^n}{q^{m+n+1}};z\biggr). \]
Then, combining this with \eqref{eq:q connector 1.5}, and using \eqref{eq:q1HG 1} again, 
we obtain 
\[\sum_{a>m}\frac{1-q^mz^{a-m}}{[a]}C_q^{(1)}(a,n;z)=\frac{q^n}{[n]}C_q^{(1)}(m,n;z), \]
which implies \eqref{eq:q transport 2}. This completes the proof for $\epsilon=1$. 

Almost the same argument works for $\epsilon=2$ 
if we replace the formulas \eqref{eq:q1HG 1} and \eqref{eq:q1HG 2} 
with \eqref{eq:q2HG 1} and \eqref{eq:q2HG 2}, respectively. 
\end{proof}

Given the properties \eqref{eq:q symmetry}--\eqref{eq:q transport 2} of the connected sum, 
the proof of Theorem \ref{thm:Main3} is the same as that of Theorem \ref{thm:Main2}. 

\section*{Acknowledgments}
The author sincerely thanks the anonymous referee(s) for useful comments 
to improve the presentation.

	
\end{document}